\documentclass[12pt,reqno]{amsart}

\setlength{\textheight}{23cm}
\setlength{\textwidth}{16.5cm}
\setlength{\topmargin}{-0.8cm}
\setlength{\parskip}{0.3\baselineskip}
\hoffset=-1.9cm

\usepackage{amsmath}
\usepackage{amssymb}

\numberwithin{equation}{section}

\newcommand{\C}{{\mathbb{C}}}

\newcommand{\R}{{\mathbb{R}}}

\newcommand{\Aa}{{\mathcal{A}}}

\newcommand{\Cc}{{\mathcal{C}}}

\newcommand{\Gg}{{\mathcal{G}}}
\newcommand{\Hh}{{\mathcal{H}}}

\newcommand{\Mm}{{\mathcal{M}}}

\newcommand{\euler}{\mathcal{X}_{0}}
\newcommand{\imag}{\mathrm{\mathbf{i}}}

\newcommand{\sst}{\scriptscriptstyle}

\newcommand{\ms}[1]{\mathsf{#1}}

\newcommand{\mf}[1]{\mathfrak{#1}}

\newcommand{\abs}[1]{\left\lvert #1 \right\rvert}
\newcommand{\pair}[1]{\left\langle #1 \right\rangle}

\newcommand{\eqst}[1]{\begin{equation*} #1 
                      \end{equation*}}
\newcommand{\eq}[1]{\begin{equation} #1
                    \end{equation}}

\theoremstyle{plain}
\newtheorem{thm}{Theorem}[section]
\newtheorem{lem}[thm]{Lemma}
\newtheorem*{lemma*}{Lemma}

\theoremstyle{definition}

\theoremstyle{definition}
\newtheorem{defn}{Definition}

\theoremstyle{remark}
\newtheorem*{question*}{Question}

\DeclareMathOperator{\Map}{Map}

\DeclareMathOperator{\grad}{grad}

\usepackage{hyperref}
\hypersetup{
   colorlinks = true,
    linkcolor = blue,
    citecolor = blue,   
}

\allowdisplaybreaks

\begin{document}

\title[Symplectic vortex equations for K\"ahler cones over Sasakian manifolds]{Symplectic vortex equations for K\"ahler cones over Sasakian manifolds}

\author[V. Thakre]{Varun Thakre}

\address{International Centre for Theoretical Sciences (ICTS-TIFR), Hesaraghatta, Hobli, Bengaluru 560089, India}

\email{varun.thakre@icts.res.in}

\subjclass[2010]{Primary 53C25, 53C07; Secondary 35Q40}

\date{\today}

\keywords{Spinor, Kahler vortices, Sasakian manifolds, Kahler cones, Hitchin-Kobayashi correspondence}

\begin{abstract}

We obtain a Hitchin-Kobayashi-type correspondence for symplectic vortex equations, with the target a K\"ahler cone over a compact Sasakian manifold. We show that the correspondence reduces to studying the existence and uniqueness of Kazdan-Warner equations. Using this, we construct a map between the moduli space of solutions to the symplectic vortex equations and effective divisors.

\end{abstract}

\maketitle

\section{Introduction}

In this article we study a gauged, non-linear $\sigma$-model, wherein a principal $G$-bundle over a closed K\"ahler manifold $(X, \omega_{\sst X})$ is mapped to a K\"ahler manifold $(M, \omega_{\sst M})$, also called the \emph{target manifold}. The said equations nothing but the \emph{symplectic vortex equations}, which are an analogue of vortex equations, for maps taking values in $M$. They were discovered independently by K. Cieliebak, A. Gaio and D. Salamon \cite{ciel-giao-salamon00} and by I. Mundet i Riera \cite{riera99}. The latter obtained a general form of a Hitchin-Kobayashi-type correspondence for the equations over any compact K\"ahler manifold. The correspondence relates the space of solutions the K\"ahler vortex equations upto complex and real gauge transformations. This coincides with the usual notion of ``stability" that arises in the construction of the algebraic moduli spaces, using Geometric Invariant Theory (GIT). 

The correspondence, in general, is not easy to evaluate, even in the Abelian set-up. However, when $M$ is a K\"ahler cone over a compact Sasakian manifold, we show that the stability criterion can be reduced to existence and uniqueness of solutions to Kazdan-Warner equations. The latter is a well-known, second order PDE, that originally appeared in the problem of finding Riemannian metrics with prescribed scalar curvature in two dimensions \cite{kazdan-warner74}. We show that the stability criterion coincides with that of the usual vortex equations. This enables us to construct a map from the moduli space of solutions to the space of effective divisors on $X$.

A \emph{twisted} version the equations was recently studied by the author and I. Biswas \cite{indranil-varun18} in four dimensions, when the target $M$ is a hyperK\"ahler cone over a compact 3-Sasakian manifold. The authors constructed a map from the moduli space of solutions to the equations, to the space of effective divisors. A natural question to ask is \emph{how general is the correspondence between the solutions to gauged, non-linear $\sigma$-models and effective divisors}? The question motivates our study of K\"ahler vortex equations in the this article.

\section{Sasakian Geometry}

We begin by giving a brief introduction to Sasakian geometry. The results stated in this section are mostly elementary in nature. A more detailed discussion can be found, for instance, in \cite{boyer98} and \cite{sparks10}.

Sasakian geometry is best described as an odd dimensional analogue of K\"ahler geometry. It lies at the interface of CR, contact and Riemannian geometry. 
Perhaps the most straight forward definition of a Sasakian manifold, due to C. Boyer \cite{boyer98}, is as follows
\begin{defn}
Let $(S, g_{\sst S})$ be a $2n+1$-dimensional Riemannian manifold. Consider the metric cone over $S$, with the \emph{warped metric}
\eqst{
C(S)\, := \, \R^+ \times S, \, \, \, \, \, \, g_{\sst C(S)} := \, dr^2 + r^2\, g_{\sst S}
}
where $r$ is the radial co-ordinate. A vector field $\xi \in \Gamma(TS)$ is called a \emph{Sasakian structure} on $S$ if the metric cone $C(S)$, with the warped metric, is a K\"ahler manifold.
\end{defn}

\noindent The vector field $\xi$ is also called the \emph{Reeb vector field} (or the \emph{characteristic vector field}). The simplest example of a Sasakian manifold is the odd-dimensional sphere $S^{2n-1} \subset \C^n$, with a flat-metric induced from $\C^n$. The Reeb vector field at a point $p \in S^{2n-1}$ is given by $\xi|_p = -\imag p$. This is called the standard Sasakian structure on the odd-dimensional sphere. It is possible to construct more general Sasakian structures by twisting the standard one.

\noindent Let $\eta$ denote the metric-dual 1-form of the Reeb vector field. Then the K\"ahler 2-form over the cone (with obvious identifications) can be written as
\eq{
\label{eq: kahler form on cone}
\omega_{\sst C(S)} \, = \, dr^2 \wedge \eta \, + \, r^2 d\eta.
}
\noindent Suppose that a compact Lie group $G$ acts on $S$ isometrically, preserving the contact structure $\xi$ and therefore also the contact 1-form $\eta$. Let $\mf{g}$ be the Lie algebra of $G$ and denote by $K^S_{\beta}$ the fundamental vector field due the element $\beta \in \mf{g}$. The \emph{Sasakian moment map} for the $G$-action on $S$ is defined to be the $G$-equivariant map \cite{ornea01} 
\eqst{
\gamma: S \longrightarrow \mf{g}^{\ast}, \, \, \, \, \, \, \pair{\gamma(s), \beta} = \eta\left(K^S_{\beta} \right|_s) \, \, \, \, \, \, \text{where} \, \, \, \, \, \, K^S_{\beta}|_s = \frac{d}{dt} \left(\exp(t\beta)\cdot s \right)|_{t=0} .
}
\noindent It is clear that the Sasakian action of $G$ on $S$ lifts to a Hamiltonian action on the cone, preserving $\omega_{\sst C(S)}$. The action also commutes with the homothetic action of $\R^+$ on $C(S)$. There is a canonical, globally defined K\"ahler potential on $C(S)$, which is given by $\rho_0 (r, s) \, = \, \frac{1}{2}\, r^2$ (see \cite{sparks10}). Moreover, the \emph{Euler vector field} $\euler := \, r\partial r = \grad\rho_0\,$ preserves the 1-form $\eta$. It follows from \eqref{eq: kahler form on cone} that the moment map for the $G$-action on the cone can be written as
\eqst{
\mu: \, C(S) \longrightarrow \mf{g}^{\ast}, \, \, \, \, \, \, \pair{\mu(r,s), \beta} \, =\, \rho_0(r,s) \, \pair{\gamma(s), \beta}.
}

\noindent Given the relation between the Sasakian and the K\"ahler geometry via the cone construction, it is natural to ask if there exists a reduction scheme for Sasakian manifolds, that lifts to the K\"ahler reduction on the cone. This was studied by G. Grantcharov and L. Ornea \cite{ornea01} for the case when zero is a regular value of the Sasakian moment map and by O. Dr\v{a}gulete and L. Ornea \cite{ornea06} for the non-zero value. Since the map $\mu$ is homogeneous with respect to the homothetic $\R$-action, the Marsden-Weinstein quotient of $C(S)$ is once again a K\"ahler cone over a compact Sasakian manifold. The latter corresponds to the Sasakian reduction of $S$ with respect to the $G$-action \cite{ornea01}, \cite{ornea06}.

\section{Symplectic vortices}

In this section, we briefly introduce the symplectic vortex equations, which are the main focus of our study. We refer to \cite{ciel-giao-salamon00} and \cite{riera99} for a more detailed discussion on the same.

Let $(X, \omega_{\sst X})$ be a K\"ahler manifold and $(M, \omega_{\sst M})$ be a symplectic manifold, equipped with a Hamiltonian action of a Lie group $G$. Let $\mu$ be the associated moment map 
\eqst{
\mu \, : \, M \longrightarrow \mf{g}^{\ast} \cong \mf{g}.
}
Let $P \rightarrow X$ be a principal $G$-bundle over $X$. We emphasize that the complex structure on $X$ is fixed. Denote by $\Map(P , M)^{G}$ the space of smooth $G$-equivariant maps from $P$ to $M$ and by $\Aa(P)$ the space of connections on $P$. Define the \emph{configuration space}
\eq{
\label{eq: configuration space}
\Cc \,:= \, \Map\,(P, M)^{G} \times \Aa(P).
}
There is a (right) action of the infinite-dimensional \emph{gauge group} $\Gg\, := \, \Map \, (P, G)^{G}$ on $\Cc$. 

\noindent An equivariant map $u$ and a connection $A$, together determine a $G$-equivariant map
$K^M_{A}(u)\,:\, TP \,\rightarrow\, TM$ as follows. For any $v \,\in\, T_p P$, take 
\eqst{
K^M_{A}(v)(u) \,:=\, K^M_{A(v)}(u(p)) \,\in\, T_{u(p)}M.
}
Note that $A(v)\, \in\, \mf{g}$. The differential of $u$ is also $G$-equivariant map. We define the covariant derivative of $u$ with respect to a connection $A$ to be the one-form $D_{A}u \in \Omega^1(P, u^{\ast}TM)^{G}$
\eqst{
D_{A}u \,=\, du + K^M_{A}(u)\, .
}
This is an equivariant, horizontal one-form on $P$. Indeed, for any $\beta \in \mf{g}$
\eqst{
D_A u \left(K^{P}_{\beta} \right) = du\left(K^{P}_{\beta} \right)\, + \, K^M_{A\left(K^{P}_{\beta} \right)}(u) = - K^M_{\beta}(u) \, + \, K^M_{\beta}(u) = 0.
}
It therefore descends to a one-form on $X$ with values in the pull-back bundle $(u^{\ast}TM)/G$ over $X$. Denote by $\overline{\partial}_{\ms{A}}u$, the $(0,1)$-part of this 1-form, meaning
\eqst{
\overline{\partial}_{A}u \,=\, \frac{1}{2} \left(D_{A}u \, + \, I \circ D_{A}u \circ \widetilde{I}_X \right),
}
where $\widetilde{I}_X$ is the lift of the complex structure $I_X$ to the horizontal subspace $\Hh_{A} \subset TP$ and $I$ is the almost-complex structure on $M$, compatible with $\omega_{\sst M}$.

For a pair $(u, A) \in \Cc$, the symplectic vortex equations are a system of equations, satisfying
\eq{\label{eq: symplectic vortices}
\left\{
    \begin{array}{lcl}
     \overline{\partial}_{A}u = 0 \\      
     \Lambda_{\omega_{\sst X}} F_{A} \, - \, \mu \circ u \, + \tau = 0 \\[1mm]
     F^{0,2}_{A} = 0
    \end{array}
  \right.}
where $F_{A}$ is the curvature of $A$ and $\tau \in Z(\mf{g})$ is an element in the center of the Lie algebra. The equations are invariant under the action of the infinite-dimensional gauge group.

\section{A Hitchin-Kobayashi correspondence}

In this section, we establish a Hitchin-Kobayashi type correspondence for the case when $G = {\rm U}(1)$ and $M$ is a K\"ahler cone over a compact Sasakian manifold $(S, g_{\sst S})$. Assume that there is an isometric action of ${\rm U}(1)$ on $S$, preserving the Sasakian structure. Then the action lifts to a Hamiltonian action on $M = C(S)$, commuting with the homothetic $\R^+$ action. For the rest of the article, we fix a principal ${\rm U}(1)$-bundle $P \rightarrow X$.

The equations \eqref{eq: symplectic vortices} can be viewed as Kempf-Ness theory in infinite dimensions as follows. To begin with, in what follows, we implicitly assume the completion of the configuration space and the gauge group in the appropriate Sobolev $(k,p)$-norm, with $k-\frac{2n}{p}>0$, where $n$ is the complex dimension of $X$. For details on Sobolev completion of maps between manifolds, we refer to Subsection 4.1, Appendix B of \cite{wehrheim}. 

The configuration space carries a natural K\"ahler structure, induced by the complex structures $I_X$ on $X$ and $I$ on $M$ (see \cite{riera99}, Sec. 2.3) and the $L^2$-metric. The (right) action of the gauge group preserves the K\"ahler structure. The second equation of \eqref{eq: symplectic vortices} can be interpreted as a moment map for the holomorphic action of $\Gg$ on the configuration space $\Cc$. Moreover, the action extends naturally to an action of its complexification $\Gg^{\C}\, = \, \Map \, (X, \, \C^{\ast})$, with respect to the induced complex structure. Consider the complex sub-variety
\eqst{
\Cc^{1,1} := \, \Map(P, M)^{{\rm U}(1)} \times \Aa^{1,1}(P)
}
where $ \Aa^{1,1}(P) \subset \Aa(P)$ is the subspace of all connections $A$ such that $F_A^{0,2} = 0$. Then the action of $\Gg^{\C}$ descends to an action on $\Cc^{1,1}$. The question of stability, which is the essense of Hitchin-Kobayashi correspondence, narrows down to asking when does a $\Gg^{\C}$-orbit in $\Cc^{1,1}$ intersect the zero of the infinite-dimensional moment map. Notice that the first and third equations of \eqref{eq: symplectic vortices} are anyway invariant under $\Gg^{\C}$.  This is a common paradigm in gauge theory, pioneered by M. Atiyah and R. Bott \cite{atiyah-bott82}. Since its inception, the technique has found applications in various other contexts, most notably by Donaldson \cite{donaldson83}, \cite{donaldson85} and by Uhlenbeck and Yau \cite{uhlenbeck-yau86} to relate stable vector bundles over complex manifolds with Hermitian-Einstein vector bundles and in the study of various other gauge theoretic equations \cite{oscar94}, \cite{oscar-bradlow95}, \cite{teleman96}.

Define $\Hh^{1,1} \subset \Cc^{1,1}$ to be the subset given by
\eqst{
\Hh^{1,1} \, := \, \{(u,A) \in \Cc^{1,1} \, | \, \overline{\partial}_{A}u = 0. \}
}
Let $\Phi \, : \, \Cc \longrightarrow \Map(X, \imag \R)$ denote the moment map for the holomorphic action of the gauge group, given by
\eqst{
\Phi(u,A) \, = \, \Lambda_{\omega_{\sst X}}F_{A} \, - \, \imag \mu\circ u \, + \imag\tau, ~~~~ \text{where} ~~~~ \tau \in \R.
}
The moduli space of solutions to \eqref{eq: symplectic vortices} is a K\"ahler sub-manifold of $\Phi^{-1}(0)/\Gg$, given by
\eqst{
\Mm^{SV}(g_{\sst X}, M) \, := \, \left(\Hh^{1,1} \, \cap \, \Phi^{-1}(0)\right)/\Gg.
}

Denote by $F_0 \subset M$ the fixed-point set of the ${\rm U}(1)$-action on $M$ and consider the dense open set 
\eqst{
\Hh^{ss} \, := \, \{(u,A) \in \Hh^{1,1} \, | \, u(P) \not\subset F_0 \}.
}

\begin{thm}[\textbf{Hitchin-Kobayashi correspondence}] 
\label{thm: main thm 1}
Let $(u,A) \in \Hh^{1,1}$ and assume that $\tau > \frac{2\pi}{\text{Vol}(X)}\deg_{\omega_{\sst X}} P$. Then the moduli space $\Mm^{SV}(g_{\sst X})$ is non-empty and has a holomorphic description
\eqst{
\Mm^{SV}(g_{\sst X}) \cong \Hh^{ss}/\Gg^{\C}.
}
\end{thm}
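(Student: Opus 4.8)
The plan is to recognize Theorem~\ref{thm: main thm 1} as an instance of the Kempf--Ness principle in infinite dimensions, following the Atiyah--Bott--Mundet paradigm recalled above. Since $\overline{\partial}_{A}u=0$ and $F^{0,2}_{A}=0$ cut out the $\Gg^{\C}$-invariant subset $\Hh^{1,1}$ and $\Phi$ is the moment map for the $\Gg$-action, the theorem amounts to the orbit-theoretic statement that \emph{every $\Gg^{\C}$-orbit in $\Hh^{ss}$ meets $\Phi^{-1}(0)$ in exactly one $\Gg$-orbit}. This produces a bijection $\Hh^{ss}/\Gg^{\C}\to(\Hh^{1,1}\cap\Phi^{-1}(0))/\Gg=\Mm^{SV}(g_{\sst X})$, which is biholomorphic because the complex structure on $\Cc^{1,1}$ is preserved by $\Gg^{\C}$. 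As $G={\rm U}(1)$ is abelian, the first move is to reduce the complex gauge freedom to a single real unknown: writing $\Gg^{\C}=\Map(X,\C^{\ast})$ and $\Gg=\Map(X,{\rm U}(1))$, every $\Gg^{\C}$-orbit is swept out modulo $\Gg$ by the transformations $e^{\phi}$, $\phi\in\Map(X,\R)$. Thus it suffices to show that for each fixed $(u,A)\in\Hh^{ss}$ there is a unique $\phi$ with $\Phi\big(e^{\phi}\cdot(u,A)\big)=0$.

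The next step is to make $\Phi(e^{\phi}\cdot(u,A))$ explicit. Two transformation laws are needed. First, a non-unitary change of gauge shifts the curvature of the abelian connection by a Laplacian, $\Lambda_{\omega_{\sst X}}F_{e^{\phi}\cdot A}=\Lambda_{\omega_{\sst X}}F_{A}-\imag\,\Delta_{\omega_{\sst X}}\phi$. Second---and here the K\"ahler cone hypothesis enters---the moment map is homogeneous along the complexified flow: because $\mu(r,s)=\rho_0(r,s)\,\gamma(s)=\tfrac12 r^{2}\gamma(s)$ is quadratic in the radial coordinate and the $\R^{+}$-part of the complexified ${\rm U}(1)$-action rescales that coordinate, one finds $\mu\circ\big(e^{\phi}\cdot u\big)=e^{2\phi}\,(\mu\circ u)$. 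Substituting both into the definition of $\Phi$ and dividing by $\imag$ converts $\Phi(e^{\phi}\cdot(u,A))=0$ into a scalar, second-order PDE of Kazdan--Warner type,
\eqst{
\Delta_{\omega_{\sst X}}\phi \,+\, (\mu\circ u)\,e^{2\phi} \;=\; \tau \,-\, \imag\,\Lambda_{\omega_{\sst X}}F_{A},
}
all of whose terms are now real-valued.

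I would solve this equation using the existence and uniqueness theory for equations of Kazdan--Warner type. When the coefficient $\mu\circ u$ is of definite sign---as it is for the cone moment map $\tfrac12 r^{2}\gamma$---the displayed equation is the Euler--Lagrange equation of the strictly convex functional $J(\phi)=\int_{X}\big(\tfrac12|\nabla\phi|^{2}+\tfrac12(\mu\circ u)\,e^{2\phi}-(\tau-\imag\,\Lambda_{\omega_{\sst X}}F_{A})\,\phi\big)\frac{\omega_{\sst X}^{n}}{n!}$, so that uniqueness of the critical point---hence of the zero of $\Phi$ in each orbit---is immediate. For existence one needs coercivity, and this is exactly where the hypothesis on $\tau$ is used: integrating the displayed equation over $X$ and applying Chern--Weil, $\int_{X}\imag\,\Lambda_{\omega_{\sst X}}F_{A}\,\tfrac{\omega_{\sst X}^{n}}{n!}=2\pi\deg_{\omega_{\sst X}}P$, gives
\eqst{
\frac{1}{\mathrm{Vol}(X)}\int_{X}(\mu\circ u)\,e^{2\phi}\,\frac{\omega_{\sst X}^{n}}{n!}\;=\;\tau-\frac{2\pi}{\mathrm{Vol}(X)}\deg_{\omega_{\sst X}}P,
}
whose right-hand side is strictly positive precisely under the assumption $\tau>\tfrac{2\pi}{\mathrm{Vol}(X)}\deg_{\omega_{\sst X}}P$. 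This positivity, together with the stability condition $(u,A)\in\Hh^{ss}$ (i.e.\ $u(P)\not\subset F_{0}$, so that $\mu\circ u\not\equiv0$), is precisely the solvability criterion for the Kazdan--Warner equation and supplies the coercivity of $J$; a minimizer then exists in the fixed Sobolev completion and is upgraded to a smooth solution by elliptic regularity.

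It remains to assemble the correspondence. The set $\Hh^{ss}$ is $\Gg^{\C}$-invariant and the holomorphicity constraints $\overline{\partial}_{A}u=0$, $F^{0,2}_{A}=0$ are $\Gg^{\C}$-preserved, so $\Hh^{ss}/\Gg^{\C}$ is well defined; conversely the same bound on $\tau$ forces every solution of \eqref{eq: symplectic vortices} to satisfy $\mu\circ u\not\equiv0$, hence to lie in $\Hh^{ss}$, so that no orbit is omitted. Existence and uniqueness of the zero of $\Phi$ in each orbit then give the asserted bijection, and non-emptiness of $\Mm^{SV}(g_{\sst X})$ is immediate once a single orbit has been produced. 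I expect the principal obstacle to be the second transformation law: verifying that the complexified ${\rm U}(1)$-action on the cone rescales the moment map exactly as $e^{2\phi}(\mu\circ u)$---that is, controlling how the complexified flow interacts with the radial and Reeb directions of $C(S)$ so that the homogeneity is genuinely quadratic---and then confirming that the resulting Kazdan--Warner solvability threshold coincides, term by term, with the stated degree bound on $\tau$.
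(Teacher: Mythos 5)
Your overall route coincides with the paper's: for $G={\rm U}(1)$ you reduce the complex gauge freedom to a single real function $f$, compute the two transformation laws $\Lambda_{\omega_{\sst X}}F_{e^{f}\cdot A}=\Lambda_{\omega_{\sst X}}F_{A}-\imag\,\Delta_{X}f$ and $\mu\circ(e^{f}u)=e^{2f}\,\mu\circ u$, and arrive at exactly the Kazdan--Warner equation $\Delta_{X}f+(\mu\circ u)\,e^{2f}=\tau-\imag\,\Lambda_{\omega_{\sst X}}F_{A}$ derived in the paper as \eqref{eq: sympl. vort. kazdan-warner}, with the degree bound on $\tau$ entering in both cases as positivity of the mean of the right-hand side. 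The only methodological difference is that you solve the PDE by a direct variational argument (strict convexity and coercivity of the functional $J$), whereas the paper invokes the Kazdan--Warner existence and uniqueness theorem (Lemma \ref{lem: kazdan-warner}); that difference is cosmetic.

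The genuine gap is your assertion that the coefficient $B=\mu\circ u$ is ``of definite sign --- as it is for the cone moment map $\tfrac12 r^{2}\gamma$''. This is false in general: the Sasakian moment map $\gamma(s)=\eta\bigl(K^{S}_{\beta}|_{s}\bigr)$ has no reason to be non-negative. For instance, on $S^{3}\subset\C^{2}$ with the standard Sasakian structure and the isometric ${\rm U}(1)$-action $e^{\imag\theta}\cdot(z_{1},z_{2})=(e^{\imag\theta}z_{1},e^{-\imag\theta}z_{2})$, one computes $\gamma=\abs{z_{2}}^{2}-\abs{z_{1}}^{2}$, which changes sign; hence $\mu\circ u$ can change sign, depending on the image of $u$. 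Everything in your PDE step hinges on this sign: without $B\ge 0$ your functional $J$ is not convex (so your uniqueness argument fails) and the Kazdan--Warner solvability criterion does not apply (so existence fails too). Note also that semistability, $u(P)\not\subset F_{0}$, only gives $\mu\circ u\not\equiv 0$, which is far weaker than ``non-negative and positive off a measure-zero set''. The paper is explicit on this point: immediately after the proof it remarks that a priori there is no reason for $B$ to have the required positivity, and it devotes the following section --- the correspondence with the $\tau$-vortex equations \eqref{eq: tau vortices} via Bradlow's Theorem \ref{thm: existence of tau vortices}, which produces $\phi$ with $\mu\circ u=\tfrac12\abs{\phi}^{2}$ --- to establishing exactly this. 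So the obstacle you flag at the end (verifying the homogeneity $\mu(e^{f}u)=e^{2f}\,\mu\circ u$) is not the real difficulty; the sign and vanishing locus of $\gamma$ along the image of $u$ is, and your proposal currently assumes it away.
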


\begin{proof}
To begin with, note that the first and the third equation of \eqref{eq: symplectic vortices} are invariant under $\Gg^{\C}$. Consequently, the action of $\Gg^{\C}$ on $\Cc$ descends to an action on $\Hh^{ss}$. So in order to prove the statement, we need to establish the existence of a complex gauge transformation $g \in \Gg^{\C}$ such that $\Phi(g\cdot u, g\cdot A) = 0$.

Consider an element $e^f \in \Gg^{\C}$, where $f: X \rightarrow \C$. If $f$ is purely imaginary, then $e^f \in \Gg$. Since the equations \eqref{eq: symplectic vortices} are gauge-invariant, we consider the case when $f$ is real. Now $\Gg^{\C}$ acts on $\Hh^{ss}$ as
\eqst{
e^f \cdot (u, A) \longmapsto \left(e^f u, A + \partial\overline{f} - \overline{\partial} f \right).
}
We have $F_{e^f\cdot A}\, = \, F_A + \overline{\partial}\partial f - \partial\overline{\partial}f$. To understand the action on $u$, recall that since $M$ is a cone over a Sasakian, it has a natural homothetic action of $\R^+$. Since the ${\rm U}(1)$-action comutes with the homothetic $\R^+$-action, we also have $\mu (e^f u) = e^{2f} \mu\circ u$. Therefore,
\eqst{
\Phi(e^f u, \, e^f\cdot A) \, = \, \Lambda_{\omega_{\sst X}}(\overline{\partial}\partial - \partial\overline{\partial})f \, - \, \imag e^{2f}\mu\circ u \, + \, \Lambda_{\omega_{\sst X}} F_A \, + \, \imag \tau.
}
To find a complex gauge transformation that preserves the zero-level set of $\Phi$, we need to solve
\eqst{
 \, \Lambda_{\omega_{\sst X}}(\overline{\partial}\partial - \partial\overline{\partial})f \, - \, \imag e^{2f}\mu\circ u \, + \, \Lambda_{\omega_{\sst X}} F_A \, + \, \imag \tau \, = \, 0.
}
Using the fact that $\Lambda_{\omega_{\sst X}} \overline{\partial}\partial f \, = \, -\frac{\imag}{2} \Delta_X f$ and $\Lambda_{\omega_{\sst X}} \partial\overline{\partial} f \, = \, \frac{\imag}{2} \Delta_X f$, where $\Delta_X$ is the positive-definite Laplacian on $X$, we can re-write the above equation as
\eqst{
\Delta_X f \, + \, e^{2f} \mu\circ u \, = \, \left(\tau \, - \, \imag\Lambda_{\omega_{\sst X}} F_A \right) \, := \, w.
}
Observe that $\mu\circ u: P \rightarrow \R$ is a ${\rm U}(1)$-invariant map and therefore descends to a real-valued function over $X$. Let us denote this function by $B$. We therefore have
\eq{
\label{eq: sympl. vort. kazdan-warner}
\Delta_X f \, + \, e^{2f} B(x) \,= \, w.
}
We now recall the following result of Kazdan and Warner \cite{kazdan-warner74}

\begin{lem}
\label{lem: kazdan-warner}
Let $X$ be a compact Riemannian manifold and let $B$ and $w$ be smooth functions on $X$. Suppose that $B$ is positive outside of a measure zero set and $\int_X w \,>\, 0$. Let $\Delta_X $ be the positive definite Laplacian on $X$. Then the equation 
\eqst{
\Delta_X f \, + \, B(x) e^{2f} \, - \, w \,=\, 0
}
has a unique solution.
\end{lem}
It follows that equation \eqref{eq: sympl. vort. kazdan-warner} has a unique solution, provided 
\eqst{
\tau > \frac{1}{\text{Vol}(X)}\, \int_X \, \imag\Lambda_{\omega_{\sst X}} F_A \, = \, \frac{2\pi}{\text{Vol}(X)}\deg_{\omega_{\sst X}} P.
}
\end{proof}
WARNING: The Laplacian used by Kazdan and Warner is the negative-definite, which differs from the one we defined here by a minus sign. 

A technical requirement in Lemma \ref{lem: kazdan-warner} is that $B$ be a positive function, outside of a set of measure zero. Apriori, there is no reason to expect this to happen. However, in the following section, we will show that this is always the case, as long as $\tau$ satisfies the condition of Theorem \ref{thm: main thm 1}.

\section{From Symplectic vortices to usual vortices and back}

Let $L \rightarrow X$ be a complex line bundle over $X$ associated to the bundle $P$. As a matter of consistency, we prefer to think of sections of the line bundle as ${\rm U}(1)$-equivariant maps from $P\rightarrow \C$. Let $A$ be a connection on $P$ such that $F_A^{0,2} = 0$. Then $A$ endows $L$ with a holomorphic structure, thus making it a holomorphic line bundle. For an equivariant map $\phi: P \rightarrow \C$ and connection $A$, the $\tau$-vortex equations on $X$ are defined as
\eq{\label{eq: tau vortices}
\left\{
    \begin{array}{lcl}
     \overline{\partial}_{A}\phi = 0 \\      
     \displaystyle \Lambda_{\omega_{\sst X}} F_{A} \, - \, \frac{\imag}{2}\abs{\phi}^2\, + \, \imag\tau  = 0 \\[2mm]
     F^{0,2}_{A} = 0
    \end{array}
  \right.}
The following existence theorem for the $\tau$-vortices was proved by S. Bradlow \cite{bradlow90}.
\begin{thm}
\label{thm: existence of tau vortices}
There exists a solution to the $\tau$-vortex equations \eqref{eq: tau vortices} if and only if $\tau > \frac{2\pi}{\text{Vol}(X)}\deg_{\omega_{\sst X}} P$.
\end{thm}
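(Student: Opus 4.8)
The plan is to establish both implications by reducing the second equation of \eqref{eq: tau vortices} to the Kazdan--Warner equation of Lemma~\ref{lem: kazdan-warner}, in direct analogy with the proof of Theorem~\ref{thm: main thm 1}.

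For the \emph{necessity} of the bound, I would start from a solution $(\phi, A)$ with $\phi \not\equiv 0$ and integrate the second equation over $X$. Multiplying by $\imag$ rewrites it as $\imag\Lambda_{\omega_{\sst X}} F_A = \tau - \frac{1}{2}\abs{\phi}^2$, and integrating against the volume form together with the Chern--Weil identity $\int_X \imag\Lambda_{\omega_{\sst X}} F_A = 2\pi\deg_{\omega_{\sst X}} P$ gives
\eqst{
\tau\,\text{Vol}(X) \, = \, 2\pi\deg_{\omega_{\sst X}} P \, + \, \frac{1}{2}\int_X \abs{\phi}^2.
}
Since $\phi$ is a non-trivial holomorphic section, the last integral is strictly positive, forcing $\tau > \frac{2\pi}{\text{Vol}(X)}\deg_{\omega_{\sst X}} P$.

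For the \emph{sufficiency}, I would fix a holomorphic structure on $L$ and a non-zero holomorphic section, i.e. a pair $(\phi, A)$ satisfying the first and third equations of \eqref{eq: tau vortices}; these two conditions are invariant under $\Gg^{\C}$. It then remains to find a real complex gauge transformation $e^f$ solving the second equation. Using $F_{e^f\cdot A} = F_A + \overline{\partial}\partial f - \partial\overline{\partial} f$ and the identities $\Lambda_{\omega_{\sst X}}\overline{\partial}\partial f = -\frac{\imag}{2}\Delta_X f$, $\Lambda_{\omega_{\sst X}}\partial\overline{\partial} f = \frac{\imag}{2}\Delta_X f$, together with $\abs{e^f\phi}^2 = e^{2f}\abs{\phi}^2$, the second equation becomes
\eqst{
\Delta_X f \, + \, \frac{1}{2}\,e^{2f}\abs{\phi}^2 \, = \, \tau \, - \, \imag\Lambda_{\omega_{\sst X}} F_A,
}
which is precisely \eqref{eq: sympl. vort. kazdan-warner} with $B = \frac{1}{2}\abs{\phi}^2$ and $w = \tau - \imag\Lambda_{\omega_{\sst X}} F_A$.

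It then suffices to verify the hypotheses of Lemma~\ref{lem: kazdan-warner}. Here lies the essential simplification over the general situation of Theorem~\ref{thm: main thm 1}: the coefficient $B = \frac{1}{2}\abs{\phi}^2$ is manifestly non-negative, and since $\phi$ is a non-zero holomorphic section its zero locus is a complex-analytic hypersurface, hence of real codimension two and measure zero, so $B$ is positive off a set of measure zero. Finally $\int_X w = \tau\,\text{Vol}(X) - 2\pi\deg_{\omega_{\sst X}} P$, which is strictly positive exactly when $\tau > \frac{2\pi}{\text{Vol}(X)}\deg_{\omega_{\sst X}} P$. Lemma~\ref{lem: kazdan-warner} then yields a unique $f$, so that $(e^f\phi, e^f\cdot A)$ solves \eqref{eq: tau vortices}. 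I expect the only genuinely delicate point to be the necessity of a non-trivial holomorphic section for solutions with $\phi\not\equiv0$ (at the boundary value $\tau\,\text{Vol}(X) = 2\pi\deg_{\omega_{\sst X}} P$ only $\phi\equiv0$ survives); the positivity of $B$, which was the obstruction flagged after Theorem~\ref{thm: main thm 1}, is automatic in this setting.
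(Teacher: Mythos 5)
The paper itself offers no proof of this statement --- it is quoted verbatim from Bradlow \cite{bradlow90} --- so the comparison here is against Bradlow's argument and against the paper's own use of the same machinery in Theorem~\ref{thm: main thm 1}. Your route is exactly that standard one: reduce the second equation of \eqref{eq: tau vortices} by a real complex gauge transformation to the Kazdan--Warner equation, and your computations are correct. The curvature identity $F_{e^f\cdot A}=F_A+\overline{\partial}\partial f-\partial\overline{\partial}f$, the reduced equation $\Delta_X f+\tfrac12 e^{2f}\abs{\phi}^2=\tau-\imag\Lambda_{\omega_{\sst X}}F_A$, the verification that $B=\tfrac12\abs{\phi}^2$ vanishes only on a proper analytic subset (so Lemma~\ref{lem: kazdan-warner} applies), and the integration argument for necessity are all sound, granted the standing convention that a solution has $\phi\not\equiv 0$ (which you correctly flag: at $\tau\,\text{Vol}(X)=2\pi\deg_{\omega_{\sst X}}P$ only pairs $(0,A)$ with constant $\imag\Lambda_{\omega_{\sst X}}F_A$ survive).

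There is, however, one genuine gap in your sufficiency direction, at its very first step: ``fix a holomorphic structure on $L$ and a non-zero holomorphic section.'' Such data need not exist. Every holomorphic structure on the fixed smooth line bundle $L$ has the same first Chern class, and a non-trivial holomorphic section exists for some holomorphic structure if and only if $c_1(L)$ is the class of an effective divisor; this is not implied by $\tau>\frac{2\pi}{\text{Vol}(X)}\deg_{\omega_{\sst X}}P$. If $\deg_{\omega_{\sst X}}P<0$, the bound holds for all small $\tau>0$ yet no holomorphic structure on $L$ admits a non-zero section, and in complex dimension at least two there are even bundles of positive degree with no effective representative (e.g.\ $\Oo(2,-1)$ on $\P^1\times\P^1$, which has degree $1$ with respect to the product polarization but no non-zero sections for any holomorphic structure). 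This is precisely why Bradlow's theorem is formulated for a \emph{fixed} holomorphic pair $(L,\phi)$ with $\phi\neq 0$, the unknown being only the Hermitian metric, i.e.\ the complex-gauge representative; the statement as quoted in the paper suppresses that hypothesis. So your argument does prove Bradlow's actual theorem --- and is the right proof of it --- but the literal ``if'' direction of the statement as written is false without the additional assumption that $c_1(L)$ is effective, and no argument can close that gap; it should be recorded as a hypothesis rather than patched in the proof.
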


Notice that the conditions for existence of solutions for both the equations \eqref{eq: symplectic vortices} and \eqref{eq: tau vortices} coincide. We are going to exploit observation to construct a map from the moduli space of symplectic vortices to the moduli space of $\tau$-vortices. In the latter case, the gauge equivalent classes of solutions are in one-to-one correspondence with the set of effective divisors on $X$. This gives us a (surjective) map between the moduli space of symplectic vortices and effective divisors. We show that the effective divisors are given by the zero set of the function $B$, defined in Theorem \ref{thm: main thm 1}.

\begin{thm}
Let $\tau > \frac{2\pi}{\text{Vol}(X)}\deg_{\omega_{\sst X}} P$ and suppose that $(u,A)$ is a solution to \eqref{eq: symplectic vortices}. Then, there exists a unique solution $(\phi, A)$ to \eqref{eq: tau vortices} such that $\mu\circ u = \frac{1}{2} \abs{\phi}^2$.

Conversely, suppose that $(\phi, A)$ is a solution to \eqref{eq: tau vortices}. Then, there exists a solution $(u,A)$ of \eqref{eq: symplectic vortices} such that $\mu\circ u = \frac{1}{2} \abs{\phi}^2$.
\end{thm}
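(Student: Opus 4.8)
The plan is to exploit the fact that both systems \eqref{eq: symplectic vortices} and \eqref{eq: tau vortices} carry the \emph{same} connection $A$ and have identical first and third equations; consequently their two second equations coincide precisely when $\mu\circ u=\tfrac12\abs{\phi}^2$. Thus the entire content is to pass between the equivariant map $u\colon P\to C(S)$ and an equivariant map $\phi\colon P\to\C$ (a section of $L$) compatibly with this norm identity and with holomorphicity, since $F^{0,2}_A=0$ is then inherited unchanged.

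For the forward direction I would first use the cone description $C(S)=\R^+\times S$ to write $u(p)=(r(p),\sigma(p))$ with $r\colon P\to\R^+$ and $\sigma\colon P\to S$. Because the ${\rm U}(1)$-action preserves the radial coordinate, $r$ is ${\rm U}(1)$-invariant and descends to $X$, while $\sigma$ is equivariant; together with $\pair{\mu(r,s),\beta}=\rho_0(r,s)\pair{\gamma(s),\beta}=\tfrac12 r^2\gamma(s)$ this gives $B=\mu\circ u=\tfrac12 r^2\,(\gamma\circ\sigma)$. I would then define $\phi$ to have modulus $\abs{\phi}=r\sqrt{\gamma\circ\sigma}$ and phase dictated by equivariance, so that $\tfrac12\abs{\phi}^2=B$ holds by construction. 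The intrinsic way to produce $\phi$ is to note that the complex structure of the cone pairs the Euler field $\euler=r\partial r$ with the Reeb field and pairs $K^M_\beta$ with $\grad\mu$; using these one identifies a ${\rm U}(1)$-invariant neighbourhood of the image, away from the fixed locus $F_0$, with the complement of the zero section in a holomorphic line bundle, on which a tautological fibre coordinate $\Psi$ is ${\rm U}(1)$-equivariant, holomorphic, and satisfies $\tfrac12\abs{\Psi}^2=\mu$. Setting $\phi:=\Psi\circ u$ then makes $\overline{\partial}_A\phi=0$ a formal consequence of $\overline{\partial}_A u=0$ and the holomorphicity of $\Psi$, so that $(\phi,A)$ solves \eqref{eq: tau vortices}.

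For the converse I would reverse this: given a $\tau$-vortex $(\phi,A)$, fix a reference point $x_0\in C(S)\setminus F_0$ with $\mu(x_0)=\tfrac12$ and set $u(p):=\phi(p)\cdot x_0$, where the dot denotes the holomorphic orbit map assembled from the homothetic $\R^+$-action (supplying the modulus) and the ${\rm U}(1)$-action (supplying the phase), with $\C^\ast$ acting on the $\phi$-factor by multiplication. Equivariance of $u$ follows from that of $\phi$ and of the orbit map; $\overline{\partial}_A u=0$ follows from holomorphicity of $\phi$ and of $z\mapsto z\cdot x_0$; and the degree-two homogeneity of $\mu$ along the homothetic direction yields $\mu\circ u=\tfrac12\abs{\phi}^2$. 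Uniqueness in the forward direction follows because $\abs{\phi}=r\sqrt{\gamma\circ\sigma}$ is forced and the holomorphic structure then determines $\phi$ up to its zero divisor, which coincides with that read off from $u$; in the converse one obtains only existence, reflecting the choice of $x_0$.

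The main obstacle is twofold, and both parts live on the target geometry rather than on $X$. First, I must establish the \emph{global} existence and holomorphicity of the equivariant coordinate $\Psi$, equivalently the equivariant phase of $\phi$: this is transparent where the ${\rm U}(1)$-action is free but needs care across $F_0$, and this is exactly where the hypothesis $u(P)\not\subset F_0$ enters. Second, and relatedly, the identity $\tfrac12\abs{\phi}^2=\mu\circ u$ forces $B=\mu\circ u\ge 0$, so a successful construction of $\phi$ simultaneously discharges the positivity requirement on $B$ left open after Lemma \ref{lem: kazdan-warner}; I would verify $B\ge 0$, positive off a set of measure zero, by combining the integral identity $\int_X B=\tau\,\text{Vol}(X)-2\pi\deg_{\omega_{\sst X}}P>0$, valid under the standing bound on $\tau$, with a maximum-principle argument for $\rho_0\circ u=\tfrac12 r^2$, which is plurisubharmonic as the pullback of the K\"ahler potential along the $\overline{\partial}_A$-holomorphic map $u$.
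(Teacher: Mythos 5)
Your strategy departs from the paper's, and its central step fails. The linchpin of your forward direction is the ``tautological fibre coordinate'' $\Psi$: a ${\rm U}(1)$-equivariant \emph{holomorphic} function on an invariant neighbourhood of $u(P)$ in $C(S)\setminus F_0$ satisfying $\tfrac12\abs{\Psi}^2=\mu$. No such $\Psi$ exists in general, even locally: wherever a nonvanishing holomorphic $\Psi$ satisfies $\tfrac12\abs{\Psi}^2=\mu$, the function $\log(2\mu)=\log\abs{\Psi}^2$ must be pluriharmonic, i.e.\ $\partial\overline{\partial}\log\mu=0$. Now take the most basic instance of the paper's setup: $S=S^{2n-1}$ with its standard Sasakian structure and the Hopf (Reeb) circle action, so that $C(S)=\C^n\setminus\{0\}$, $F_0=\emptyset$, and $\mu(z)=\tfrac12\abs{z}^2$. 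Here $\imag\,\partial\overline{\partial}\log\abs{z}^2$ is the pullback of the Fubini--Study form under $\C^n\setminus\{0\}\rightarrow\P^{n-1}$, which is nonzero at every point once $n\geq 2$. So for any such cone of complex dimension at least two, $\Psi$ exists on no open set whatsoever, and the construction $\phi:=\Psi\circ u$ never gets off the ground; your picture is accurate precisely when $n=1$, i.e.\ $M=\C^{\ast}$, where $\Psi(z)=z$ works. The same obstruction undercuts the claim that building $\phi$ ``discharges the positivity requirement on $B$.''

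The converse half has a parallel gap. As you yourself note, the complex structure $I$ pairs the Euler field $\euler$ with the \emph{Reeb} field; hence the orbit map $z=\rho e^{\imag\theta}\mapsto \rho\cdot(e^{\imag\theta}\cdot x_0)$ assembled from homothety and the \emph{given} ${\rm U}(1)$-action is holomorphic only when that action is the Reeb flow itself. For a general Sasakian ${\rm U}(1)$-action (the paper's hypothesis), $\overline{\partial}_A u=0$ therefore does not follow from $\overline{\partial}_A\phi=0$. Moreover, at zeros of $\phi$ (present whenever $\deg_{\omega_{\sst X}} P>0$) the formula $u=\phi\cdot x_0$ lands on the cone apex, which is not a point of $M=\R^+\times S$, so $u$ is not even defined there. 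For contrast, the paper never seeks a holomorphic chart on the target: in the forward direction it forces $\abs{\phi}^2=2\,\mu\circ u$ by comparing the two moment-map equations (using Theorem \ref{thm: existence of tau vortices}) and then proves $\overline{\partial}_A\phi=0$ from the identity $d(\mu\circ u)=\tfrac12\pair{\phi,D_A\phi}_{\R}$ combined with $\overline{\partial}_A u=0$; in the converse it produces $u$ from Theorem \ref{thm: main thm 1} and deduces $\overline{\partial}_A u=0$ from $d\mu\,(\overline{\partial}_A u)=0$. Any rescue of your direct construction would have to explain why the obstruction $\partial\overline{\partial}\log\mu\neq 0$ vanishes along the image of $u$ --- which is exactly the information the paper extracts from the vortex equations themselves rather than from the target geometry.
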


We fix the following notation. By $u$ be denote the equivariant map $P \rightarrow M$ and by $\phi$ we denote the equivariant map $P \rightarrow \C$.

\begin{proof}

Since $(u, A)$ is a solution to \eqref{eq: symplectic vortices}, $\Lambda_{\omega_{\sst X}} F_{A} \, - \, \mu \circ u \, + \tau = 0$. Also, as $\tau$ satisfies the condition $\tau > \frac{2\pi}{\text{Vol}(X)}\deg_{\omega_{\sst X}} P$, by Theorem \ref{thm: existence of tau vortices}, there exists a $\phi$ such that
\eqst{
\Lambda_{\omega_{\sst X}} F_{A} \, - \, \frac{\imag}{2}\abs{\phi}^2\, + \, \imag\tau  = 0.
}
It is easily seen that such a $\phi$ is unique upto the action of the gauge group $\Gg$. To show that this $\phi$ is solution to \eqref{eq: tau vortices}, we need to show that it is holomorphic. To effect this calculation, observe first that $\mu\circ u = \frac{1}{2} \abs{\phi}^2$. Therefore
\eq{
\label{eq: computation 1}
d\left(\mu\circ u \right) = d\mu (D_A u) = d\left(\frac{1}{2} \abs{\phi}^2 \right) = \frac{1}{2} \pair{\phi, D_A \phi}_{\R},
}
where $\pair{\cdot, \cdot}$ denotes the Hermitian inner product on $\C$ and the subscript $\R$ denotes the real part of the product. Equating the $(0,1)$-parts on both the sides we get
\eq{
\label{eq: computation 2}
\frac{1}{2}\pair{\phi, \overline{\partial}_A \phi}_{\R} = d\mu (\overline{\partial}_A u) = 0.
}
So, the $(0,2)$-form $\overline{\partial}\pair{\phi, \overline{\partial}_A \phi}_{\R} = \pair{\overline{\partial}_A \phi \wedge \overline{\partial}_A \phi}_{\R} = 0$, which implies $\abs{\overline{\partial}_A \phi}^2 = 0$. Thus $\phi$ is holomorphic.

To show that the converse statement holds, assume that $(\phi, A)$ is a solution to \eqref{eq: tau vortices}. Since $\tau$ satisfies the condition $\tau > \frac{2\pi}{\text{Vol}(X)}\deg_{\omega_{\sst X}} P$, by Theorem \ref{thm: main thm 1}, there must exist an equivariant map $u: P \rightarrow M$ such that 
\eqst{
\Lambda_{\omega_{\sst X}} F_{A} \, - \, \mu \circ u \, + \tau = 0.
}
By a similar argument as above, we have that $\mu\circ u = \frac{1}{2} \abs{\phi}^2$. In order to show that $(u,A)$ is a solution to \eqref{eq: symplectic vortices}, once again, we must show that $\overline{\partial}_A u = 0$. Repeating the arguments in computations \eqref{eq: computation 1} and \eqref{eq: computation 2}, we get $d\mu\, (\overline{\partial}_A u) = 0$. If $\overline{\partial}_A u$ is not identically zero, then this implies that $\overline{\partial}_A u(p) \in \ker d\mu\,(u(p)) \subset T_{u(p)} M$ for 
every $p \in T_p P$. This in turn implies that $\mu\circ u $ must be identically zero, which is a contradiction, since $\phi \neq 0$. Hence, $\overline{\partial}_A u = 0$ and $(u, A)$ is a solution to \eqref{eq: symplectic vortices}. \vspace{-2mm}

\end{proof}

\section{Map between the moduli spaces}

The above theorem gives an explicit description of the maps between the modui space of symplectic vortices and the moduli space of the usual vortices. Namely, let $\Mm^V(g_{\sst X})$ denote the moduli space of the latter. Then, we have the following map
\eq{
\label{eq: map between moduli}
\Pi: \Mm^{SV}(g_{\sst X}, M) \longrightarrow \Mm^V (g_{\sst X}), ~~~~ [(u,A)] \longmapsto [(\phi, A)], ~~ \text{where} ~~ \mu\circ u = \frac{\abs{\phi}^2}{2}.
}
Since elements of $\Mm^V (g_{\sst X})$ are in one-to-one correspondence with the set of effective divisors on $X$, $\Pi$ maps the the gauge equivalent class of solutions to \eqref{eq: symplectic vortices} to effective divisors. Since $\mu\circ u = \frac{\abs{\phi}^2}{2}$, we see also that the effective divisors are just the zeroes of $\mu\circ u$.

\section{Some Remarks}

\begin{itemize}

\item A four dimensional, Riemannian version of the correspondence \eqref{eq: map between moduli} was studied in \cite{varun17}, where the author studied a generalisation of the Seiberg-Witten equations, obtained by replacing the spinor representation with a hyperK\"ahler manifolds with certain symmetries. This generalisation was introduced by C. Taubes \cite{taubes} in three dimensions and it was extended to four dimensions by V. Pidstrygach \cite{victor}. It was shown that for hyperK\"ahler cones over 3-Sasakians, the generalised equations can be expressed purely in terms of a second order PDE for the hyperK\"ahler moment map. However, Donaldson \cite{donaldson01} had shown that the solutions to the said PDE are in one-to-one correspondence with the solutions to the usual Seiberg-Witten equations. This gives us the map \eqref{eq: map between moduli} between the corresponding moduli spaces.

\item An open question is what is the structure of fibre of the map $\Pi$? 

\item Another interesting open question is whether the results in this article, in a suitable sense, can be extended to a non-Abelian setting. 

\end{itemize}

\bibliographystyle{ieeetr}

\end{document}